\documentclass[OJMO,NoWriteXVIII]{cedram}

\newcommand{\nn}{\mathbb{N}} 
\newcommand{\rr}{\mathbb{R}} 
\newcommand{\real}{\mathbb{R}} 
\newcommand{\norm}[1]{\left\Vert {#1} \right\Vert} 
\newcommand{\erl}{\left(-\infty , +\infty\right]} 
\newcommand{\crit}[1]{\mathrm{crit}\,{#1}} 
\newcommand{\argmin}{\operatorname{argmin}} 
\newcommand{\act}[1]{\left\langle{#1}\right\rangle} 

\newcommand{\bo}{{\bf 0}}
\newcommand{\Seq}[2]{\left\{{#1}^{{#2}}\right\}_{{#2} \in \mathbb{N}}}

\title{Non-Convex Split Feasibility Problems: Models, Algorithms and Theory}

\author{\firstname{Aviv} \lastname{Gibali}}
\address{Department of Mathematics, ORT Braude College, Karmiel 2161002 \\ Israel}
\address{The Center for Mathematics and Scientific Computation, University of Haifa, Mt. Carmel, Haifa 3498838 \\ Israel}
\email{avivg@braude.ac.il}
\author{\firstname{Shoham} \lastname{Sabach}}
\address{Faculty of Industrial Engineering and Management, The Technion, Haifa, 3200003 \\ Israel}
\email{ssabach@ie.technion.ac.il}
\author{\firstname{Sergey} \lastname{Voldman}}
\address{Faculty of Industrial Engineering and Management, The Technion, Haifa, 3200003 \\ Israel}
\email{sergeyv@campus.technion.ac.il}

\keywords{Split feasibility problems, non-convex minimization, convergence analysis, constrained minimization, CQ algorithm}
  
\begin{abstract} 
	In this paper, we propose a catalog of iterative methods for solving the Split Feasibility Problem in the non-convex setting. We study four different optimization formulations of the problem, where each model has advantageous in different settings of the problem. For each model, we study relevant iterative algorithms, some of which are well-known in this area and some are new. All the studied methods, including the well-known CQ Algorithm, are proven to have global convergence guarantees in the non-convex setting under mild conditions on the problem's data.
\end{abstract}


\begin{document}
\maketitle

\section{Introduction}
	In 1994 Censor and Elfving \cite{ce94} introduced the (two-sets) \textit{Split Convex Feasibility} (SCF) Problem, which is formulated as follows. Let $C \subseteq \real^{n}$ and $Q \subseteq \real^{m}$ be two non-empty, closed and convex sets, and let $A : \real^{n} \rightarrow \real^{m}$ be a linear mapping. The SCF Problem then seeks to:
	\begin{equation} \label{P:SCFP}
		\text{find a point } x^{\ast} \in C \text{ such that } y^{\ast} = Ax^{\ast} \in Q.
	\end{equation}
	Here, in this paper, we will focus on the \textit{Split Feasibility} (SF) Problem, where the involved sets are not necessarily convex. For simplicity we start our discussion with only one set per each space and in the last section  discuss how the general multiple-sets case can be treated in a similar way.
\medskip

	The SF Problem and its convex variant were employed in solving several real-world challenging problems, such as in Intensity-Modulated Radiation Therapy (IMRT) treatment planning \cite{cbmt06}, navigation on the Pareto frontier in Multi-Criteria Optimization \cite{gks14} and others.
\medskip

	The main goal of this paper is to study four main formulations of the SF Problem as an optimization problem, where for each model we present relevant existing algorithms and develop new algorithms that can tackle it. For any mentioned algorithm, we discuss the current state-of-the-art convergence guarantee, which in some cases already exists in the literature and in some cases provided for the first time here. We summarize our study in the following table, which will be made fully clear when reading the following section.
\medskip

	\begin{table}[htb]
	\begin{center}
		\begin{tabular}{|c|c|c|c|c|}
			\hline
			Setting & Algorithm & Model & \begin{tabular}[c]{@{}c@{}}Global\\ Convergence\end{tabular} & \begin{tabular}[c]{@{}c@{}}Requirements on the \\  linear transformation $A$\end{tabular} \\ \hline
			\multirow{3}{*}{\begin{tabular}[c]{@{}c@{}}$C$ and \\ $Q$ non-convex\end{tabular}} & Proximal ADMM (Alg. 1) & (SF1) & Unknown & Unknown \\ \cline{2-5}
			& PG  (Alg. 2) & (SF1 - Penalized) & \cite[Th. 5.3, p. 120]{ABS13} & None \\ \cline{2-5}
			& CQ Algorithm (Alg. 4) & (SF1 - Penalized) & Section \ref{SSec:CQ-SF1P} & None \\ \hline
			\multirow{2}{*}{\begin{tabular}[c]{@{}c@{}}$C$ non-convex \\ and $Q$ convex\end{tabular}} & PG (Alg. 5) & (SF3) & \cite[Th. 5.3, p. 120]{ABS13} & None \\ \cline{2-5}
			& AM  (Alg. 3) & (SF1 - Penalized) & Section \ref{SSec:AM-SF1P} & None \\ \hline
			\multirow{2}{*}{\begin{tabular}[c]{@{}c@{}}$C$ convex and \\ $Q$ non-convex\end{tabular}} & Proximal ADMM (Alg. 6) & (SF4) & Section \ref{SSec:Lagrangian} & $AA^T \succ 0$ \\ \cline{2-5}
			& Linear. Prox. ADMM (Alg. 7) & (SF4) & Section \ref{SSec:Lagrangian} & $AA^T \succ 0, \,\,\kappa(A^TA)<2$ \\ \hline
		\end{tabular}
	\end{center}
\end{table}	

	In order to derive the four mentioned formulation, two functions will play a pivotal role: the \textit{indicator function} and the \textit{distance function}. Mathematically speaking, given a non-empty and closed set $D$ in the $d$-dimensional Euclidean space $\real^{d}$, the \textit{indicator function} is defined by
	\begin{equation*}
		\delta_{D}\left(u\right) =
		\begin{cases}
			0, & \text{if } u \in D, \\
			+\infty, & \text{otherwise}.
		\end{cases}
	\end{equation*}
	The \textit{distance function} of the set $D$, denoted by $d_{D}: \real^{d} \rightarrow \real_{+}$, is given by
	\begin{equation*}
		d_{D}\left(u\right) = \min \left\{ \norm{v - u} : \, v \in D \right\}.
	\end{equation*}
	These two functions are obviously lower semi-continuous, non-smooth and, in general, non-convex, unless the set $D$ is convex. Another essential property of these functions, especially when reformulating the SF Problem as an optimization problem, is that both functions are non-negative and achieve their minimal value of zero only on the set $D$. 
\medskip

	Before presenting the models, it is important to note that for the sake of preferable mathematical properties (at least in the convex setting, as we discuss below), the function $d_{D}^{2}\left(\cdot\right)$ is usually considered, rather than the distance function itself. For example, when the set $D$ is convex, then the function $d_{D}^{2}\left(\cdot\right)$ is smooth. This property is very important in deriving solution methods and establishing their theoretical guarantees.
\medskip

	Next we present the four optimization models for the SF Problem, which use the above two functions.
\smallskip

\fbox{\parbox{3in}{\vspace{0.05in}\center{$\text{(SF1)} \quad \min_{x \in \real^{n}} \left\{ \delta_{C}\left(x\right) + \delta_{Q}\left(Ax\right) \right\}$}\vspace{0.05in}}} \qquad \fbox{\parbox{3in}{\vspace{0.05in}\center{$\text{(SF2)} \quad \min_{x \in \real^{n}} \left\{ \frac{1}{2}d_{C}^{2}\left(x\right) + \frac{1}{2}d_{Q}^{2}\left(Ax\right) \right\}$}\vspace{0.05in}}}
\medskip

\fbox{\parbox{3in}{\vspace{0.05in}\center{$\text{(SF3)} \quad \min_{x \in \real^{n}} \left\{ \delta_{C}\left(x\right) + \frac{1}{2}d_{Q}^{2}\left(Ax\right) \right\}$}\vspace{0.05in}}} \qquad \fbox{\parbox{3in}{\vspace{0.05in}\center{$\text{(SF4)} \quad \min_{x \in \real^{n}} \left\{ \frac{1}{2}d_{C}^{2}\left(x\right) + \delta_{Q}\left(Ax\right) \right\}$}\vspace{0.05in}}}
\vspace{0.2in}

	One of the main advantages of formulating the SF Problem as an optimization problem is that there is no need to know/assume in advance consistency of the problem, that is, that the original feasibility problem has a solution. In case that the SF Problem has a solution then the above four models can achieve the minimal value zero. In the convex setting, solutions of the SF Problem coincide with the optimal solution of any of the optimization models. On the other hand, in the non-convex setting, since optimal solutions are very difficult to find, we only know that if the value of the achieved solution is zero then this is a solution of the SF Problem. 
\medskip

	Throughout the paper we will only focus on algorithms which have the following two important properties. First, computational efficiency, by that we mean algorithms which use only orthogonal projections (defined below) onto the individual sets $C$ and $Q$, the mapping $A$ and its transpose. For example, algorithms which use the inverse mapping of $A$ fail in this aspect, due to their computational deficiencies in solving large-scale instances of the SF Problem (see, for instance, \cite{ce94} for such an algorithm). Second, algorithms with a provable convergence theory, and since we focus on the non-convex setting, we mean by that algorithms with convergence guarantees to critical points of the used optimization model (exact details and definitions will be given in the sequel).
\medskip

	In the area of SCF Problems there is one central and well-known algorithm, which was designed to tackle the optimization model (SF3), and is called the CQ Algorithm \cite{Byrne02, Byrne04}. This algorithm, which exploits the structure of the model (SF3) in the convex setting, is nothing else but the classical Projected Gradient (PG) Method. Since, here we focus on the non-convex setting, we point interested readers to \cite{BT2010} and the recent book \cite{B2017-B}, where theoretical results on the PG method can be found, including accelerations and various modifications.
\medskip

	While the literature on iterative methods for solving the SCF Problem is focused on the optimization model (SF3) and to the best of our understanding most algorithms are modifications and/or generalizations of the CQ Algorithm for other settings of the problem or other related problems with similar affinities, there is one recent paper \cite{chen19} (as far as we know) that study the CQ Algorithm in the non-convex setting. We will discuss this work in details, later on, after giving the needed notations and relevant definitions.
\medskip

	Speaking on the different optimization models mentioned above, we have witnessed that most of the works focus on developing algorithms that solve the optimization model (SF3), sometime even without paying attention to this fact. However, another model that got some attention, again only in the convex setting, is (SF2), see for instance \cite{ms07}. In this case, this model poses a completely smooth optimization problem, that can be solved via classical techniques of optimization for smooth and unconstrained problems, such as the Gradient Descent Method, the Newton Method and others (see for example the books \cite{BSS,Bertsekas}). Therefore, in the non-convex setting, this model is less relevant and we actually do not know of any algorithm that can tackle it. To summarize, the models (SF2) and (SF3) are indeed very natural in the convex setting, due to the nice properties of the squared distance function, something that is much less attractive in the non-convex and preclude using relevant	 algorithmic ideas. We will see below that, in the non-convex setting, the model (SF1) plays an important role and even allows to easily recover the CQ Algorithm through different algorithmic idea. Model (SF4) also allows for some algorithms developments as will be given below.
\medskip

	We conclude this part with a short description of the rest of the paper. In Section \ref{Sec:Models} we will discuss each optimization model and present the relevant algorithms (if any) that can tackle it in the non-convex setting. The theoretical guarantees for each algorithm will be presented in Section \ref{Sec:Theory}, where in some cases the convergence result is already known and in other cases the result is new and proven here for the first time. It should be noted that in this paper we focus on theoretical guarantees in the sense of global convergence of the whole generated sequence, in comparison to sub-sequences convergence, to critical points of the tackled model. Another type of theoretical guarantees that can be achieved for such algorithms in the non-convex setting is of a local nature and consist of results of linear rate of convergence. This line of research, which usually requires more information of the involved sets, is not studied here but interested readers can find several recent results in \cite{LukNguTam18} and the references therein. We end the paper, with Section \ref{Sec:Extensions}, where we discuss, in short, the extension of the SF Problem to the multi-set setting.

\section{Algorithms for Solving the SF Problem} \label{Sec:Models}
	The indicator and distance functions, as we discussed above, are our central tools in reformulating the SF Problem as a minimization problem. Another object that is very related to these two functions is the \textit{orthogonal projection} operator. Given a non-empty and closed subset $D$ of the Euclidean space $\real^{d}$, the \textit{orthogonal projection} onto the set $D$ is the operator $P_{D} : \real^{d} \rightarrow D$ which is defined by
	\begin{equation*}
		P_{D}\left(u\right) = \argmin \left\{ \norm{v - u} : \, v \in D \right\}.
	\end{equation*}
	Since the set $D$ is not necessarily convex, the resulting projector $P_{D}$ may be a set-valued operator. In other words, for a given point $u \in \rr^d$, the set $P_D(u)$ consists of all vectors in $D$ that are closest to the point $u$. This connect us to the distance function, defined above, since we obviously have, for all $u \in \real^{d}$, that $d_{D}\left(u\right) = \norm{v - u}$, for any $v \in P_{D}\left(u\right)$.
\medskip

	Recall that in the convex setting, the squared distance function enjoys some nice properties, two of them are given next, and others can be found, for example, in \cite{BC2011-B}.
	\begin{prop}
		Let $D$ be a non-empty, closed and convex subset of $\real^{d}$. Then, the following hold:
		\begin{itemize}
			\item[$\rm{(i)}$] The function $d_{D}^{2}\left(\cdot\right)$ is continuously differentiable.
			\item[$\rm{(ii)}$] The gradient of $\left(1/2\right)d_{D}^{2}\left(\cdot\right)$ is Lipschitz continuous with constant $1$ (nonexpansive) and is given by $I_{d} - P_{D}$, where $I_{d}$ is the identity mapping of $\real^{d}$.
		\end{itemize}    		
	\end{prop}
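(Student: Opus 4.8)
The plan is to reduce everything to two facts about the projection operator $P_{D}$ when $D$ is convex: that it is single-valued and that it is \emph{firmly nonexpansive}. First I would recall the variational characterization of the projection: for $u \in \real^{d}$, a point $p \in D$ equals $P_{D}(u)$ if and only if $\ip{u - p, v - p} \le 0$ for every $v \in D$. Convexity of $D$ is exactly what makes this characterization available and, via strict convexity of $v \mapsto \norm{v-u}^{2}$, guarantees that the minimizer is unique, so $P_{D}$ is a genuine single-valued map. Applying this inequality at two points $u_{1}, u_{2}$ with projections $p_{1}, p_{2}$ and adding the two resulting inequalities yields the firm nonexpansiveness estimate $\norm{p_{1} - p_{2}}^{2} \le \ip{u_{1} - u_{2}, p_{1} - p_{2}}$, from which $1$-Lipschitz continuity of $P_{D}$ follows by Cauchy--Schwarz.

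The heart of the argument is computing the gradient of $h := (1/2) d_{D}^{2}$. Write $p = P_{D}(u)$ and, for a perturbation vector $w$, $p_{w} = P_{D}(u+w)$. I would squeeze the increment $h(u+w) - h(u)$ between two bounds obtained by testing each projection against the competing point. Using that $p \in D$ is feasible for $u+w$ gives the upper bound $h(u+w) - h(u) \le \ip{u-p, w} + (1/2)\norm{w}^{2}$, while using that $p_{w} \in D$ is feasible for $u$ gives the lower bound $h(u+w) - h(u) \ge \ip{u+w-p_{w}, w} - (1/2)\norm{w}^{2}$. The nonexpansiveness of $P_{D}$ then controls the error, since $\norm{p_{w} - p} \le \norm{w}$ implies $\ip{u+w-p_{w}, w} = \ip{u-p, w} + O(\norm{w}^{2})$. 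Combining the two bounds shows $h(u+w) - h(u) = \ip{u-p, w} + O(\norm{w}^{2})$, which is precisely differentiability of $h$ at $u$ with $\nabla h(u) = u - p = (I_{d} - P_{D})(u)$, establishing the gradient formula in (ii).

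For the Lipschitz claim and the $C^{1}$ conclusion of (i), I would show that $I_{d} - P_{D}$ is itself firmly nonexpansive. This is a short algebraic manipulation: expanding $\norm{(I_{d}-P_{D})u_{1} - (I_{d}-P_{D})u_{2}}^{2}$ and substituting the firm nonexpansiveness estimate for $P_{D}$ derived above shows that $I_{d}-P_{D}$ satisfies the same estimate, hence is nonexpansive, i.e. Lipschitz with constant $1$. Since $\nabla h = I_{d} - P_{D}$ is then (Lipschitz) continuous, $d_{D}^{2} = 2h$ is continuously differentiable, giving (i).

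The main obstacle, and the only place convexity is genuinely used, is the passage from the variational inequality to firm nonexpansiveness together with the error control in the squeeze argument; everything else is bookkeeping. One should be slightly careful that the upper and lower bounds genuinely sandwich the increment to the same first order — the estimate $\norm{p_{w}-p}\le\norm{w}$ is exactly what is needed to absorb the cross term into the $O(\norm{w}^{2})$ remainder.
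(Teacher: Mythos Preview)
Your argument is correct. Note, however, that the paper does not actually supply a proof of this proposition: it is stated as a recalled fact with a pointer to \cite{BC2011-B}, and the text immediately moves on. So there is no ``paper's own proof'' to compare against.

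What you wrote is essentially the standard textbook derivation one finds in that reference: obtain the variational inequality characterizing $P_{D}$, deduce firm nonexpansiveness of $P_{D}$, compute $\nabla\bigl(\tfrac12 d_{D}^{2}\bigr)$ by sandwiching the increment between $\tfrac12\norm{u+w-p}^{2}$ and $\tfrac12\norm{u+w-p_{w}}^{2}$, and finish by checking that $I_{d}-P_{D}$ inherits firm nonexpansiveness. The only small stylistic remark is that one can bypass the squeeze argument entirely via the Moreau decomposition $\tfrac12\norm{\cdot}^{2} = \tfrac12 d_{D}^{2} + \bigl(\tfrac12 d_{D}^{2}\bigr)^{\ast}\!\circ I_{d}$ (equivalently, recognizing $\tfrac12 d_{D}^{2}$ as the Moreau envelope of $\delta_{D}$), which gives differentiability and the gradient formula in one stroke; but your direct approach is equally valid and arguably more self-contained.
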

	In this section we study the four optimization models mentioned above and relevant iterative methods for solving the SF Problem (\ref{P:SCFP}) defined in the Euclidean spaces $\real^{n}$ and $\real^{m}$. Since we are interested in the non-convex setting, we will distinguish in our study between the following three scenarios:
	\begin{itemize}
		\item[$\rm{(i)}$] Both sets $C$ and $Q$ are non-convex.
		\item[$\rm{(ii)}$] The set $C$ is convex and $Q$ is non-convex.
		\item[$\rm{(iii)}$] The set $C$ is non-convex and $Q$ is convex.    		
	\end{itemize}
	Although the last two scenarios include one convex set, the associated optimization models still posses a non-convex minimization. However, due to the partial convexity, we will be able to exploit it in order to design iterative algorithms for solving a relevant model. In order to gain from the convexity of one of the sets, we will consider models that use the squared distance function with respect to this set. See more details below.

\subsection{The Model (SF1)} \label{SSec:ModelSF1}
	We begin with the model (SF1), which tackles both sets $C$ and $Q$ via their associated indicator functions, that is,
	\begin{equation*}
		\min_{x} \left\{ \delta_{C}\left(x\right) + \delta_{Q}\left(Ax\right) \right\}.
	\end{equation*}
	This model fits to the completely non-convex scenario, and all the algorithms to be presented soon do not require any convexity in their development and analysis (exact details will be given in Section \ref{Sec:Theory}). This is a non-smooth and non-convex minimization, which at a first glance seems to be the most complex model between all the four mentioned models, even when both sets are convex. However, the situation is exactly the opposite, and this formulation opens the gate for some interesting algorithms. Moreover, this formulation paves the way to easily obtain the CQ Algorithm in the non-convex setting through a different approach as we will develop below.
\medskip

	The optimization model (SF1) belongs to the well-studied class of additive composite models, which is a very popular and active area of research in optimization, both in the convex and non-convex settings. Putting aside, for the moment, the issue of convexity, the main difficulty comes from the composition of the linear mapping $A$ in the term $\delta_{Q}\left(Ax\right)$. The most popular approach to handle this situation, is methods which decompose the linear mapping $A$ out of the indicator function $\delta_{Q}\left(\cdot\right)$, and therefore usually called decomposition or splitting methods. More precisely, we reformulate Model (SF1) by introducing a new variable $u$, which leads to the following linearly constrained non-smooth minimization
	\begin{equation} \label{SF1-Split_0}
		\min_{x \in \real^{n}, u \in \real^{m}} \left\{ \delta_{C}\left(x\right) + \delta_{Q}\left(u\right) : \, Ax = u \right\}.
	\end{equation}
	This splitting technique usually leads to Lagrangian-based methods. Therefore, the augmented Lagrangian of Problem \eqref{SF1-Split_0} is given by
	\begin{equation*}
		L_{\rho}\left(x , u , y\right) = \delta_{C}\left(x\right) + \delta_{Q}\left(u\right) + \act{y , Ax - u} + \frac{\rho}{2} \norm{Ax - u}^{2},
	\end{equation*}
	where $y \in \real^{m}$ is the multiplier which associated with the linear constraint $Ax = u$, and $\rho > 0$ is a penalization parameter.
\medskip

	The well-known Alternating Directions Method of Multipliers (ADMM) \cite{GM1975,GM1976} is a very popular algorithm for solving large-scale problems with linear constrains (see, for example, a recent review paper on Lagrangian-based methods \cite{ST2019} including the references therein). Here, we will consider the Proximal ADMM (which includes ADMM as a particular case and can be useful in some cases) that applied to the augmented Lagrangian of Problem \eqref{SF1-Split_0} and recorded now.
\medskip

	\noindent\fbox{\parbox{\textwidth}{\textbf{Algorithm 1 - Proximal ADMM for (SF1)} \\ 
		\textbf{Initialization:} Choose $x^{0} \in \real^{n}$ and $u^{0} , y^{0} \in \real^{m}$, and fix $\tau_{1} , \tau_{2} \geq 0$. \\
		\textbf{General Step:} For $k = 0 , 1 , 2 , \ldots$ compute
		\begin{align*}
			u^{k + 1} & \in \argmin_{u} \left\{ \delta_{Q}\left(u\right) + \act{y^{k} , Ax^{k} - u} + \frac{\rho}{2}\norm{Ax^{k} - u}^{2} + \frac{\tau_{1}}{2}\norm{u - u^{k}}^{2} \right\}, \\
			x^{k + 1} & \in \argmin_{x} \left\{ \delta_{C}\left(x\right) + \act{y^{k} , Ax  - u^{k + 1}} + \frac{\rho}{2}\norm{Ax - u^{k + 1}}^{2} + \frac{\tau_{2}}{2}\norm{x - x^{k}}^{2} \right\}, \\
			y^{k + 1} & = y^{k} + \rho\left(Ax^{k + 1} - u^{k + 1}\right).
		\end{align*}}}
\medskip

	However, this algorithm is hard to analyze (to the best of our knowledge there is no convergence theory for this algorithm in the non-convex setting). Therefore, we will not pursue this direction for this model, but discuss Lagrangian-based methods for the model (SF4) below.
\medskip

	Now, we would like to study another way to tackle the optimization model (SF1) and thus we return to Problem \eqref{SF1-Split_0}. We consider now a different approach to tackle the linear constraint. A very popular idea in the literature to handle linear constraints is to penalize them. In this case, the following optimization problem is used
	\begin{equation} \label{SF1-Penalize}
		\min_{x \in \real^{n} , u \in \real^{m}} \left\{ \delta_{C}\left(x\right) + \delta_{Q}\left(u\right) + \frac{\lambda}{2}\norm{Ax - u}^{2} \right\},
	\end{equation}
	where $\lambda > 0$ is a penalizing parameter. This and the previous problems are obviously not equivalent, but it is easy to verify that an obtained solution $x^{\ast}$ is a solution of the SF Problem at hand if and only if the pair $\left(x^{\ast} , Ax^{\ast}\right)$ is an optimal solution of Problem \eqref{SF1-Penalize}, with the optimal value of zero.
\medskip

	The optimization problem \eqref{SF1-Penalize} consists of two blocks, which are coupled through the smooth and convex function $\left(\lambda/2\right)\norm{Ax - u}^{2}$. In addition, the problem includes two separable constraints via the corresponding indicator functions. Such problems belong to the class of block additive composite models, which are well-studied in the convex literature and even in the non-convex there are several works on this class of problems, see, for instance, \cite{BST14}. We present here two approaches -- one that exploits the block structure and the other which does not.
\medskip

	First approach tackles the two blocks as a single block via the Projected Gradient Method, which enjoys the separability in the constraint sets $C$ and $Q$ in order to produce a parallel method. Taking a step-size $\tau > 0$, after simple manipulations we obtain the following algorithm.
\medskip

	\noindent\fbox{\parbox{\textwidth}{\textbf{Algorithm 2 - Projected Gradient for (SF1-Penalized)} \\
		\textbf{Initialization:} Choose $x^{0} \in \real^{n}$ and $u^{0} \in \real^{m}$, and fix $\tau > 0$. \\
		\textbf{General Step:} For $k = 0 , 1 , 2 , \ldots$ compute
		\begin{align*}
			u^{k + 1} & \in P_{Q}\left(u^{k} - \frac{\lambda}{\tau}\left(u^{k} - Ax^{k}\right)\right), \\
			x^{k + 1} & \in P_{C}\left(x^{k} - \frac{\lambda}{\tau}A^{T}\left(Ax^{k} - u^{k}\right)\right).
		\end{align*}}}
\medskip

	In a similar spirit to the application of the PG Method we can actually apply the inertial variant, which yields better performances in practice and tend to converges to critical points with lower function value (more on that can be found in \cite{PS2016}). To keep the paper short, we do not go into details here, but the convergence guarantees we present below also apply to the inertial variant.
\medskip

	Our second approach to tackle the penalized problem \eqref{SF1-Penalize} is designed to exploit the two block structure of the problem. To this end, we will use the Alternating Minimization (AM) technique, where the separability in the constraints is crucial. In this case, we obviously have a cyclic algorithm rather than a parallel as in the PG Method.
\medskip

	\noindent\fbox{\parbox{\textwidth}{\textbf{Algorithm 3 - Alternating Minimization for (SF1-Penalized)} \\
		\textbf{Initialization:} Choose $x^{0} \in \real^{n}$. \\
		\textbf{General Step:} For $k = 0 , 1 , 2 , \ldots$ compute
		\begin{align}
			u^{k + 1} & \in \argmin_{u} \left\{ \delta_{Q}\left(u\right) + \frac{\lambda}{2} \norm{Ax^{k} - u}^{2} \right\}, \label{AMu} \\
			x^{k + 1} & \in \argmin_{x} \left\{ \delta_{C}\left(x\right) + \frac{\lambda}{2}\norm{Ax - u^{k + 1}}^{2} \right\}. \label{AMx}
		\end{align}}}
\medskip

	A careful inspection of the two updating rules reveals a similar computational situation to the Proximal ADMM mentioned above. More precisely, it is easy to see that Problem \eqref{AMu} simply reduces to the computation of the orthogonal projection of $Ax^{k}$ onto the set $Q$. Therefore, as long as the operator $P_{Q}$ is explicitly given or can be efficiently computed, the update of the block $u$ is easy. However, this is not the case with the updating rule of the block $x$. Indeed, Problem \eqref{AMx} is not easily solved unless the matrix $A^{T}A$ is invertible, which is a very demanding assumption that is not assumed in this paper.
\medskip

	We will overcome this obstacle by linearizing the quadratic coupling term. This idea can be seen also as incorporating the AM Algorithm above with a PG step applied on the minimization \eqref{AMx}. More precisely, the suggested algorithm is recorded now, where we denote by $\lambda_{max}\left(A^{T}A\right)$ the largest eigenvalue of the symmetric square matrix $A^{T}A$.
\medskip

	\noindent\fbox{\parbox{\textwidth}{\textbf{Algorithm 4 - Semi Alternating Projected Gradient for (SF1-Penalized)} \\
		\textbf{Initialization:} Choose $x^{0} \in \real^{n}$ and fix $\tau > \lambda_{max}\left(A^{T}A\right)$. \\
		\textbf{General Step:} For $k = 0 , 1 , 2 , \ldots$ compute
		\begin{align}
			u^{k + 1} & \in P_{Q}\left(Ax^{k}\right), \label{SAPGu} \\
			x^{k + 1} & \in \argmin_{x} \left\{ \delta_{C}\left(x\right) + \lambda\act{A^{T}\left(Ax^{k} - u^{k + 1}\right) , x - x^{k}} + \frac{\tau}{2}\norm{x - x^{k}}^{2} \right\}. \label{SAPGx}
		\end{align}}}
\medskip

	Now, the updating rule of the $x$ block is explicitly solvable and by substituting the solution to the $u$ block from \eqref{SAPGu}, we obtain
	\begin{equation*}
		x^{k + 1} \in P_{C}\left(x^{k} - \frac{\lambda}{\tau}A^{T}\left(Ax^{k} - u^{k + 1}\right)\right) = P_{C}\left(x^{k} - \frac{\lambda}{\tau}A^{T}\left(Ax^{k} - P_{Q}\left(Ax^{k}\right)\right)\right).
	\end{equation*}
	The obtained algorithm (after substituting the auxiliary variable $u$) is exactly the CQ Algorithm, which is recovered via application of the Semi Alternating Projected Gradient on the penalized formulation of Problem (SF1). As far as we know this is a new approach to get the CQ Algorithm (rather than applying the PG Method on the Model (SF3), as we discuss in the next subsection), which allow us to derive convergence guarantees under much milder assumptions on the sets $C$ and $Q$ in comparison to \cite{liu18}. 
	
\subsection{The Model (SF3)}
	Now we move to the next optimization model
	\begin{equation*}
		\min_{x} \left\{ \delta_{C}\left(x\right) + \frac{1}{2}d_{Q}^{2}\left(Ax\right) \right\} = \min_{x \in C} d_{Q}^{2}\left(Ax\right),
	\end{equation*}
	where the last equality follows from the property of the indicator function. Meaning, we have a constrained minimization.
\medskip

	As we already discussed before, this model of the SF Problem makes sense mainly if the set $Q$ is convex, since then the function $\left(1/2\right)d_{Q}^{2}\left(\cdot\right)$ is continuously differentiable, and its gradient is given by $I_{m} - P_{Q}$. More importantly, this gradient is Lipschitz continuous with constant $1$. Algorithmically speaking, these properties opens the gate for applying the Projected Gradient (PG) Method, even when the set $C$ is non-convex. This leads again to the well-known CQ Algorithm \cite{Byrne02, Byrne04} via different optimization model and different algorithmic framework (but limited only to the case where $Q$ is convex). 
\medskip

	\noindent\fbox{\parbox{\textwidth}{\textbf{Algorithm 5 - Projected Gradient for (SF3)} \\
		\textbf{Initialization:} Choose $x^{0} \in \real^{n}$ and fix $\tau > 0$. \\
		\textbf{General Step:} For $k = 0 , 1 , 2 , \ldots$ compute
		\begin{align*}
			x^{k + 1} & \in \argmin_{x} \left\{ \delta_{C}\left(x\right) + \act{A^{T}\left(Ax^{k} - P_{Q}\left(Ax^{k}\right)\right) , x - x^{k}} + \frac{\tau}{2}\norm{x - x^{k}}^{2} \right\} \nonumber \\
			& = P_{C}\left(x^{k} - \frac{1}{\tau}A^{T}\left(Ax^{k} - P_{Q}\left(Ax^{k}\right)\right)\right).
		\end{align*}}}
\medskip

	Very recently, the authors of \cite{liu18} managed to derive the CQ Algorithm in the fully non-convex setting via this optimization model using techniques of Difference-of-Convex (DC) Programming. However, in their paper the authors assume two main assumptions on the involved sets $C$ and $Q$, which are not needed in our case. First assumption revolves around the regularity of the distance function $d_{Q}\left(y^{\ast}\right)$, at limit points of the form $y^{\ast} = Ax^{\ast}$, which is needed to obtain the sub-differential of the distance function in the non-convex setting. However, this is a demanding assumption that is difficult to check in practice. In addition, the regularity assumption should be satisfied at limit points of the generated sequence, which complicates more the  ability to verify it (unless all point in $A(C)$ are regular). The second assumption is a prox-regularity of the set $Q$ at limit points of the generated sequence. Verifying this assumption in practice could be also very difficult, and limit significantly the applicability of the results. Under this assumption we assure that the function $d_{Q}^{2}\left(\cdot\right)$ is continuously differentiable, which is crucial property in their global convergence analysis.

\subsection{The Model (SF4)} \label{SSec:ModelSF4}
	In this part we study the Model (SF4), which like the Model (SF3) mixes the indicator and distance functions in the following way
	\begin{equation*}
		\min_{x} \left\{ \frac{1}{2}d_{C}^{2}\left(x\right) + \delta_{Q}\left(Ax\right) \right\},
	\end{equation*}
	and following the same reasons as in the previous part, we assume here that the set $C$ is convex.
\medskip

	Due to the composition of the nonsmooth function $\delta_Q(\cdot)$ with the linear mapping $A$, we can not apply the PG Method on this problem, and would need to consider again splitting techniques. By applying a simple splitting technique, as above, with the introduction of a new variable $u$, we derive an equivalent formulation of the problem
	\begin{equation*}
		\min_{x \in \real^{n} , u \in \real^{m}} \left\{ \frac{1}{2}d_{C}^{2}\left(x\right) + \delta_{Q}\left(u\right) : \, Ax = u \right\}.
	\end{equation*}
	Using the augmented Lagrangian associated with this problem, we can apply the Proximal ADMM to obtain the following algorithm.
\medskip

	\noindent\fbox{\parbox{\textwidth}{\textbf{Algorithm 6 - Proximal ADMM for (SF4)} \\
		\textbf{Initialization:} Choose $x^{0} \in \real^{n}$ and $u^{0} , y^{0} \in \real^{m}$, and fix $\tau \geq 0$. \\
		\textbf{General Step:} For $k = 0 , 1 , 2 , \ldots$ compute
		\begin{align}
			u^{k + 1} & \in \argmin_{u} \left\{ \delta_{Q}\left(u\right) + \act{y^{k} , Ax^{k} - u} + \frac{\rho}{2}\norm{Ax^{k} - u}^{2} \right\}, \label{PADMMu} \\
			x^{k + 1} & = \argmin_{x} \left\{ \frac{1}{2}d_{C}^{2}\left(x\right) + \act{y^{k} , Ax - u^{k + 1}} + \frac{\rho}{2}\norm{Ax - u^{k + 1}}^{2} + \frac{\tau}{2}\norm{x - x^{k}}^{2} \right\}, \label{PADMMx} \\
			y^{k + 1} & = y^{k} + \rho\left(Ax^{k + 1} - u^{k + 1}\right). \label{PADMMy}
		\end{align}}}
\medskip

	First, it is easy to notice that if the parameter $\tau$ is chosen to be zero then we recover the classical ADMM. More importantly, is regarding the updating step of the auxiliary variable $u$. In this case, the augmented term $\left(\rho/2\right)\norm{Ax^{k + 1} - u}^{2}$ serves as a natural proximal term and therefore no need for an additional proximal term. In addition, the sub-problem given in step \eqref{PADMMu} are explicitly solvable and given via the projection onto the set $Q$. Indeed, simple manipulations reveal that
	\begin{equation*}
		u^{k + 1} = P_{Q}\left(\frac{1}{\rho}\left(y^{k} + \rho Ax^{k} + \tau_{1}u^{k}\right)\right).
	\end{equation*}		
	On the other hand, the updating step with respect to the variable $x$ is more involved and requires computing the inverse of the matrix $\rho A^{T}A + \tau I_{n}$ (where $I_{n}$ is the identity matrix), and as mentioned above, could be computationally expansive in large-scale instances.
\medskip

	A classical approach to overcome this obstacle is by linearizing, at iteration $k$, the augmented term $\left(\rho/2\right)\norm{Ax - u^{k}}^{2}$ around the current iteration $x^{k}$ (for more details on this approach in the convex setting we refer to \cite{HY2012,ST2014} and in the non-convex setting to \cite{BST2018}). This leads us to the Linearized Proximal ADMM, which is recorded now.
\medskip

	\noindent\fbox{\parbox{\textwidth}{\textbf{Algorithm 7 - Linearized Proximal ADMM for (SF4)} \\
		\textbf{Initialization:} Choose $x^{0} \in \real^{n}$ and $u^{0} , y^{0} \in \real^{m}$, fix $\tau > 0$. \\
		\textbf{General Step:} For $k = 0 , 1 , 2 , \ldots$ compute
		\begin{align}
			u^{k + 1} & \in P_{Q}\left(\frac{1}{\rho}\left(y^{k} + \rho Ax^{k} + \tau_{1}u^{k}\right)\right). \label{LADMMu} \\
			x^{k + 1} & = \argmin_{x} \left\{ \act{x^{k} - P_{C}\left(x^{k}\right) + \rho A^{T}\left(Ax^{k} - u^{k + 1}\right) , x} +  \act{y^{k} , Ax - u^{k + 1}} + \frac{\tau}{2}\norm{x - x^{k}}^{2} \right\}, \label{LADMMx} \\
			y^{k + 1} & = y^{k} + \rho\left(Ax^{k + 1} - u^{k + 1}\right).\label{LADMMy}
		\end{align}}}
\medskip

	In this case we obviously see that the updating step of the variable $x$ is also explicitly computable and is given by the following formula
	\begin{equation*}
		x^{k + 1} = \left(1 - \frac{1}{\tau}\right)x^{k} + \frac{1}{\tau}P_{C}\left(x^{k}\right) - \frac{1}{\tau}A^{T}\left(y^k + \rho\left(Ax^{k} - u^{k + 1}\right)\right).
	\end{equation*}
	Therefore, in this case we obtain a completely computable algorithm for solving the SF Problem in the partially non-convex setting. As we already mentioned, these two algorithms come with provable convergence guarantees that will be given below in Section \ref{Sec:Theory}.
\medskip

	As the reader probably notice, we did not discuss Model (SF2) so far. The main reason for that is the fact that in the non-convex setting there is no way to exploit properties of this model, which could be handy in designing solution schemes. However, for the SCF Problem, that is, when both sets are convex the situations is better and, as mentioned above, several algorithms can be applied .

\section{Convergence Analysis} \label{Sec:Theory}
	In this section we first recall the table presented in the Introduction and summarize the convergence information regarding all the $7$ algorithms presented in Section \ref{Sec:Models}. The convergence of two algorithm follows directly from known results as stated in table, while one algorithm is lack of convergence theory. Below, we will provide the details about the convergence analysis of all the other four algorithms. Therefore, let us first recall the definition of semi-algebraic sets and functions.

	\begin{defi}[Semi-algebraic sets and functions]
		\begin{itemize}
			\item[$\rm{(i)}$] A subset $S$ of $\real^{d}$ is a real \textit{semi-algebraic set} if there exists a finite number of real polynomial functions $g_{ij} , h_{ij} : \rr^{d} \rightarrow \rr$ such that
				\begin{equation*}
					S = \bigcup_{j = 1}^{p} \bigcap_{i = 1}^{q} \left\{ u \in \rr^{d} : \; g_{ij}\left(u\right) = 0 \, \text{and } \, h_{ij}\left(u\right) < 0 \right\}.
				\end{equation*}
			\item[$\rm{(ii)}$] A function $h : \real^{d} \rightarrow \left(-\infty , +\infty\right]$ is called \textit{semi-algebraic} if its graph
				\begin{equation*}
					\left\{ \left(u , t\right) \in \rr^{d + 1} : \; h\left(u\right) = t \right\}
				\end{equation*}
				is a semi-algebraic subset of $\real^{d + 1}$.
		\end{itemize}
	\end{defi}	
	We are not aiming at reviewing here these notions but few comments could be of help for the reader (more details can be found in \cite{AB2009,ABS13,BDL2006,BDLM2010,BST14} and references therein). First, most of the constraint sets that are used in real-world applications are semi-algebraic, for example, balls with respect to the $\ell_{1}$-norm, $\ell_{2}$-norm, $\ell_{0}$-norm, cone of PSD matrices, Stiefel manifold, to mention just a few. In addition, a very important fact to our study, is that the indicator and distance functions are semi-algebraic whenever the associated set is semi-algebraic.
\medskip

	Semi-algebraicity is mentioned here due to its pivotal role in a recent proof methodology of optimization algorithms in the non-convex setting, which is based on the Kurdyka-{\L}ojasiewicz property \cite{L1963,K1998,BDL2006}. The origin of this proof technique was a decade ago in the paper of Attouch and Bolte \cite{AB2009}. Later on, in \cite{BST14}, the technique was unified and extended into a general proof methodology that can be applied to any algorithm in the non-convex setting as long as the optimization problem at hand involves semi-algebraic objects. Very recently, in \cite{BSTV2018}, a concise version of the methodology was presented and the concept of gradient-like descent sequences was coined. Here, we will mention the slight modification of this concept that was studied first in \cite{STV18}.
	\begin{defi}[Gradient-like Descent Sequence] \label{D:Gradlike}
		Let  $F : \real^{d} \times \real^{p} \rightarrow \erl$ be a proper and lower semicontinuous function which is bounded from below, and let $\left\{ \left(z^{k} , v^{k}\right) \right\}_{k \in \nn}$ be a sequence generated by a certain algorithm for solving the problem
		\begin{equation*}
			\inf \left\{ F\left(z , v\right) : \; z \in \real^{d}, \,\, v \in \real^{p} \right\}.
		\end{equation*}
		We say that $\left\{ \left(z^{k} , v^{k}\right) \right\}_{k \in \nn}$ is \textit{a gradient-like descent sequence} for minimizing $F$ if the following three conditions hold:
		\begin{itemize}
			\item[$\rm{(C1)}$] \textit{Sufficient decrease property.} There exists a positive scalar $\rho_{1}$ such that
				\begin{equation*}
					\rho_{1}\norm{z^{k + 1} - z^{k}}^{2} \leq F\left(z^{k} , v^{k}\right) - F\left(z^{k + 1} , v^{k + 1}\right), \quad \forall \,\, k \in \nn.
				\end{equation*}
			\item[$\rm{(C2)}$] \textit{A subgradient lower bound for the iterates gap.} There exists a positive scalar $\rho_{2}$ such that         	
				\begin{equation*}
					\norm{w^{k + 1}} \leq \rho_{2}\norm{z^{k + 1} - z^{k}}, \quad w^{k + 1} \in \partial F\left(z^{k + 1} , v^{k + 1}\right), \quad \forall \,\, k \in \nn.
				\end{equation*}
			\item[$\rm{(C3)}$] Let $\left(\overline{z} , \overline{v}\right)$ be a limit point of a subsequence $\left\{ \left(z^{k} , v^{k}\right) \right\}_{k \in K}$, then $\limsup_{k \in K \subset \nn} F\left(z^{k} , v^{k}\right) \leq F\left(\overline{z} , \overline{v}\right)$.
		\end{itemize}	
	\end{defi}
	Based on these conditions, global convergence of the generated sequence $\Seq{z}{k}$ to a critical point of $F$ can be proved (see \cite[Theorem 6.2, p. 2149]{BSTV2018}). As usual in the non-convex setting, we will use here the concept of limiting subdifferential for characterizing critical points of $F$ and thanks to Fermat's rule \cite[Theorem 10.1, p. 422]{RW1998}, the set of critical points of $F$ is given by:
	\begin{equation*}
		\crit F = \left\{ \left(z , v\right) \in \real^{d} \times \real^{p} : \;  \bo \in \partial F\left(z , v\right) \right\}.
	\end{equation*}
	Now, we can record the convergence result mentioned above.
	\begin{theo}[Global Convergence] \label{T:AbstrGlob}
		Let $\left\{ \left(z^{k} , v^{k}\right) \right\}_{k \in \nn}$ be a bounded gradient-like descent sequence for minimizing $F$. If $F$ is semi-algebraic (or, in general, satisfies the KL property), then the sequence $\Seq{z}{k}$ converges to some $z^{\ast}$. In addition, for any limit point $v^{\ast}$ of $\Seq{v}{k}$ we have that $\left(z^{\ast} , v^{\ast}\right) \in \crit F$.
	\end{theo}
	Therefore, below we will show that the following algorithms: AM for (SF1 - Penalized) and CQ Algorithm for (SF1 - Penalized), generate gradient-like descent sequences and thus, under the semi-algebraicity assumption, converge to a critical point of the associated optimization model. It is very easy to notice that the proof methodology mentioned above can not be applied to the Lagrangian-based algorithms (they are obviously not descent algorithms) and therefore require a different proof technique that will be discussed below in Section \ref{SSec:Lagrangian}.
\medskip

	One last comment before proving the convergence results of the two algorithms is regarding the boundedness assumption, which is made in Theorem \ref{T:AbstrGlob}. The boundedness assumption on the generated sequence by any of the $7$ algorithms holds in several scenarios such as when the sets $C$ and $Q$ are bounded. For a few more scenarios see \cite{ABRS2010} and also \cite{liu18} which discuss the same setting of SF Problems. Another possibility is to assume that the following condition holds
	\begin{equation*}
		C^{\infty} \cap A^{-1} D^{\infty} = \left\{ \bo \right\},
	\end{equation*}
	which implies the boundedness of the generated sequence (see \cite{liu18}).

\subsection{Convergence of the CQ Algorithm for (SF1 - Penalized)} \label{SSec:CQ-SF1P}
	Recalling the optimization model \eqref{SF1-Penalize}
	\begin{equation*}
		\min_{x \in \real^{n} , u \in \real^{m}} \left\{ F_{1}\left(x , u\right) := \delta_{C}\left(x\right) + \delta_{Q}\left(u\right) + \frac{\lambda}{2}\norm{Ax - u}^{2} \right\}.
	\end{equation*}
	Therefore, from \cite[Exercise 8.8 and Proposition 10.5]{RW1998}, we have that
	\begin{equation*}
		\crit F_{1}\left(x , u\right) = \left(\partial \delta_{C}\left(x\right) + \lambda A^{T}\left(Ax - u\right) , \partial \delta_{Q}\left(u\right) + \lambda\left(u - Ax\right)\right).
	\end{equation*}
	We will show now that the Semi Alternating Projected Gradient method (which is nothing but the CQ Algorithm) generates a globally convergent sequence to critical points of $F_{1}$ under only the semi-algebraicity assumption of the involved sets $C$ and $Q$. 
	\begin{prop}
		Let $\left\{ \left(x^{k} , u^{k}\right) \right\}_{k \in \nn}$ be a bounded sequence generated by the Semi Alternating Projected Gradient method. Suppose that $C$ and $Q$ are semi-algebraic sets. Then, the sequence $\Seq{x}{k}$ converges globally to some point $x^{\ast}$ such that for any limit point $u^{\ast}$ of the sequence $\Seq{u}{k}$, the pair $\left(x^{\ast} , u^{\ast}\right)$ is a critical point of $F_{1}$.
	\end{prop}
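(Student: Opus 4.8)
The plan is to recognize $F_1$ as a proper, lower semicontinuous and semi-algebraic function that is bounded from below (indeed $F_1 \ge 0$, being a sum of indicators and a squared norm, with $C$ and $Q$ semi-algebraic making $\delta_C,\delta_Q$ semi-algebraic and the coupling polynomial), and then to show that the iterates of the Semi Alternating Projected Gradient method, read as the sequence $\left\{(x^k,u^k)\right\}$ with descent variable $z=x$ and auxiliary variable $v=u$, form a gradient-like descent sequence in the sense of Definition~\ref{D:Gradlike}. Once conditions (C1)--(C3) are verified, Theorem~\ref{T:AbstrGlob} yields the conclusion directly: boundedness together with semi-algebraicity forces the whole sequence $\left\{x^k\right\}$ to converge to some $x^\ast$, with $(x^\ast,u^\ast)\in\crit{F_1}$ for every limit point $u^\ast$ of $\left\{u^k\right\}$. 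I would note at the outset that since Algorithm~4 initializes only $x^0$, the pair $(x^k,u^k)$ is well defined for $k\ge 1$, with $x^k\in C$ and $u^k\in Q$ at those indices; the abstract theorem is insensitive to this finite index shift.

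For the sufficient decrease (C1) I would argue in two stages, fixing one block at a time. The update $u^{k+1}\in P_Q(Ax^k)$ is by definition a global minimizer of $u\mapsto \delta_Q(u)+\frac{\lambda}{2}\norm{Ax^k-u}^2$, so comparing with the choice $u=u^k$ gives $F_1(x^k,u^{k+1})\le F_1(x^k,u^k)$, requiring no convexity of $Q$. For the $x$-update, observe that $\lambda A^T(Ax^k-u^{k+1})=\nabla_x g(x^k)$ for the smooth coupling $g(x)=\frac{\lambda}{2}\norm{Ax-u^{k+1}}^2$, whose gradient is Lipschitz continuous with modulus $\lambda\lambda_{max}(A^TA)$, so that \eqref{SAPGx} is a prox-linear step. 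Combining the optimality of $x^{k+1}$ for that subproblem (tested against $x=x^k$) with the descent lemma applied to $g$ yields $F_1(x^{k+1},u^{k+1})\le F_1(x^k,u^{k+1})-\rho_1\norm{x^{k+1}-x^k}^2$, with $\rho_1>0$ by virtue of the lower bound imposed on $\tau$ in Algorithm~4. Chaining the two inequalities gives (C1).

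The subgradient bound (C2) is where the structure of the scheme pays off, and I expect it to be the delicate step. Writing the first-order conditions of the two subproblems produces $\xi^{k+1}:=-\lambda A^T(Ax^k-u^{k+1})-\tau(x^{k+1}-x^k)\in\partial\delta_C(x^{k+1})$ and $\eta^{k+1}:=\lambda(Ax^k-u^{k+1})\in\partial\delta_Q(u^{k+1})$. Substituting these into the subdifferential formula for $F_1$ recorded above, that is forming $w_x^{k+1}=\xi^{k+1}+\lambda A^T(Ax^{k+1}-u^{k+1})$ and $w_u^{k+1}=\eta^{k+1}+\lambda(u^{k+1}-Ax^{k+1})$, the terms carrying $u^{k+1}$ cancel precisely because the $u$-update and the linearization are both anchored at the \emph{same} previous point $x^k$. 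What survives is $w_x^{k+1}=(\lambda A^TA-\tau I_n)(x^{k+1}-x^k)$ and $w_u^{k+1}=\lambda A(x^k-x^{k+1})$, so that $w^{k+1}=(w_x^{k+1},w_u^{k+1})\in\partial F_1(x^{k+1},u^{k+1})$ satisfies $\norm{w^{k+1}}\le\rho_2\norm{x^{k+1}-x^k}$. The crucial point, and the reason $z=x$ alone can serve as the descent variable, is that no residual proportional to $\norm{u^{k+1}-u^k}$ appears.

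Finally, (C3) is essentially free: along any subsequence converging to $(\overline{x},\overline{u})$, feasibility $x^k\in C$, $u^k\in Q$ together with closedness of the sets gives $\overline{x}\in C$ and $\overline{u}\in Q$, so the indicator terms vanish both along the sequence and at the limit, and continuity of the quadratic coupling yields $F_1(x^k,u^k)\to F_1(\overline{x},\overline{u})$, which is stronger than the required $\limsup$ inequality. With (C1)--(C3) established and boundedness assumed, invoking Theorem~\ref{T:AbstrGlob} completes the proof.
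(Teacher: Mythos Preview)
Your proof is correct and follows essentially the same route as the paper: you verify conditions (C1)--(C3) of Definition~\ref{D:Gradlike} and invoke Theorem~\ref{T:AbstrGlob}, with the same subgradient construction $w^{k+1}=\left((\lambda A^{T}A-\tau I_n)(x^{k+1}-x^{k}),\,\lambda A(x^{k}-x^{k+1})\right)$ for (C2) and the same two-stage argument for (C1) (your ``descent lemma'' phrasing is exactly the Pythagoras-plus-eigenvalue-bound computation the paper carries out explicitly). Your treatment of (C3) is slightly more detailed but identical in substance.
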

	\begin{proof}
		We first prove that the sequence $\left\{ \left(x^{k} , u^{k}\right) \right\}_{k \in \nn}$ is a a gradient-like descent sequence for minimizing $F_{1}$ according to Definition \ref{D:Gradlike}. We begin with the first condition (C1). From the Pythagoras identity we have that
		\begin{align*}
			\frac{1}{2}\norm{Ax^{k + 1} - u^{k + 1}}^{2} & = \frac{1}{2}\norm{Ax^{k} - u^{k + 1}}^{2} + \act{Ax^{k} - u^{k + 1} , Ax^{k + 1} - Ax^{k}} + \frac{1}{2}\norm{Ax^{k + 1} - Ax^{k}}^{2} \\
			& \leq \frac{1}{2}\norm{Ax^{k} - u^{k + 1}}^{2} + \act{Ax^{k} - u^{k + 1} , Ax^{k + 1} - Ax^{k}} + \frac{\lambda_{max}\left(A^{T}A\right)}{2}\norm{x^{k + 1} - x^{k}}^{2}.
		\end{align*}
		Writing the optimality condition of \eqref{SAPGx} yields that
		\begin{equation*}
			\delta_{C}\left(x^{k + 1}\right) + \lambda\act{Ax^{k} - u^{k + 1} , Ax^{k + 1} - Ax^{k}} + \frac{\tau}{2}\norm{x^{k + 1} - x^{k}}^{2} \leq \delta_{C}\left(x^{k}\right).
		\end{equation*}
		By combining the last two inequalities we obtain that
		\begin{equation*}
			\frac{\tau - \lambda \cdot \lambda_{max}\left(A^{T}A\right)}{2}\norm{x^{k + 1} - x^{k}}^{2} + F\left(x^{k + 1} , u^{k + 1}\right) \leq F\left(x^{k} , u^{k + 1}\right).
		\end{equation*}
		From the optimality of $u^{k + 1}$ in step \eqref{SAPGu} we obtain
		\begin{equation*}
			F\left(x^{k} , u^{k + 1}\right) \leq F\left(x^{k} , u^{k}\right),
		\end{equation*}
		and therefore from the last two inequalities the first assertion holds.
	
		For proving condition (C2) we use the optimality conditions associated with the two steps \eqref{SAPGu} and \eqref{SAPGx}, meaning
		\begin{equation*}
			\lambda\left(Ax^{k} - u^{k + 1}\right) \in \partial \delta_{Q}\left(u^{k + 1}\right)	,	
		\end{equation*}
		and
		\begin{equation*}
			\lambda A^{T}\left(u^{k + 1} - Ax^{k}\right) + \tau\left(x^{k} - x^{k + 1}\right) \in \partial \delta_{C}\left(x^{k + 1}\right).
		\end{equation*}
		Therefore, by subtracting $\lambda Ax^{k + 1}$ and adding $\lambda A^{T}Ax^{k + 1}$, respectively, from both sides of each inclusion, we arrive at
		\begin{equation*}
			\lambda\left(Ax^{k} - Ax^{k + 1}\right) \in \partial \delta_{Q}\left(u^{k + 1}\right) + \lambda\left(u^{k + 1} - Ax^{k + 1}\right),	
		\end{equation*}
		and
		\begin{equation*}
			\lambda A^{T}\left(Ax^{k + 1} - Ax^{k}\right) + \tau\left(x^{k} - x^{k + 1}\right) \in \partial \delta_{C}\left(x^{k + 1}\right) + \lambda A^{T}\left(Ax^{k + 1} - u^{k + 1}\right).
		\end{equation*}
		This proves that $w^{k + 1} := \left(\left(\lambda A^{T}A - \tau I\right)\left(x^{k + 1} - x^{k}\right) , \lambda A\left(x^{k} - x^{k + 1}\right)\right) \in \partial F\left(x^{k + 1} , u^{k + 1}\right)$. Hence, the assertion follows.
	
		Lastly, condition (C3) easily follows since on the generated sequence we have that $F_{1}\left(x^{k} , u^{k}\right) = \left(\lambda/2\right)\norm{Ax^{k} - u^{k}}^{2}$. Therefore $F_{1}$ is continuous on the generated sequence, which proves the required assertion.
		
		Now, from Theorem \ref{T:AbstrGlob} we get the desired result.
	\end{proof}

\subsection{Convergence of AM for (SF1 - Penalized)} \label{SSec:AM-SF1P}
	Here we again focus on the model \eqref{SF1-Penalize} and therefore use again the function $F_{1}\left(x , u\right) = \delta_{C}\left(x\right) + \delta_{Q}\left(u\right) + \left(\lambda/2\right)\norm{Ax - u}^{2}$, but with the additional assumption that the set $Q$ is convex.
\medskip

	For simplicity of the proof we will analyze the following shifted equivalent version of the algorithm.
\medskip

	\noindent\fbox{\parbox{\textwidth}{\textbf{Alternating Minimization for (SF1-Penalized)} \\
		\textbf{Initialization:} Choose $x^{0} \in \real^{n}$ and set $u^{1} = P_{Q}\left(Ax^{0}\right)$. \\
		\textbf{General Step:} For $k = 1 , 2 , \ldots$ compute
		\begin{align}
			x^{k + 1} & \in \argmin_{x} \left\{ \delta_{C}\left(x\right) + \frac{\lambda}{2}\norm{Ax - u^{k}}^{2} \right\}, \label{AMNx} \\
			u^{k + 1} & \in \argmin_{u} \left\{ \delta_{Q}\left(u\right) + \frac{\lambda}{2} \norm{Ax^{k + 1} - u}^{2} \right\}. \label{AMNu}
		\end{align}}}
\medskip

	\begin{prop}
		Let $\left\{ \left(x^{k} , u^{k}\right) \right\}_{k \in \nn}$ be a bounded sequence generated by the AM Algorithm. Suppose that $C$ and $Q$ are semi-algebraic sets and $Q$ is convex. Then, the sequence $\Seq{u}{k}$ converges globally to some point $u^{\ast}$ such that for any limit point $x^{\ast}$ of the sequence $\Seq{x}{k}$, the pair $\left(x^{\ast} , u^{\ast}\right)$ is a critical point of $F_{1}$.
	\end{prop}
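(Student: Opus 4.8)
The plan is to show that the sequence $\left\{\left(x^{k}, u^{k}\right)\right\}_{k \in \nn}$ generated by the shifted AM Algorithm is a gradient-like descent sequence for $F_{1}$ in the sense of Definition \ref{D:Gradlike}, and then to invoke Theorem \ref{T:AbstrGlob}. The essential structural observation is that the coupling term $\left(\lambda/2\right)\norm{Ax - u}^{2}$ is $\lambda$-strongly convex in the block $u$ but only convex, and possibly degenerate since $A^{T}A$ may be singular, in the block $x$. Consequently the descent cannot be measured in $\norm{x^{k+1} - x^{k}}$, as it was for the CQ Algorithm; instead I would measure it in $\norm{u^{k+1} - u^{k}}$. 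Accordingly, in the abstract framework I set the convergent variable to be $z = u$ and the auxiliary variable to be $v = x$, which is exactly what the conclusion of the proposition asserts.

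For the sufficient decrease condition (C1) I would split each iteration into its two updates. The $x$-update \eqref{AMNx} is an exact minimization, so $F_{1}\left(x^{k+1}, u^{k}\right) \leq F_{1}\left(x^{k}, u^{k}\right)$ with no gap required. For the $u$-update \eqref{AMNu} the convexity of $Q$ is decisive: the function $u \mapsto \delta_{Q}\left(u\right) + \left(\lambda/2\right)\norm{Ax^{k+1} - u}^{2}$ is $\lambda$-strongly convex and $u^{k+1}$ is its minimizer, so the standard strong-convexity inequality at a minimizer yields $F_{1}\left(x^{k+1}, u^{k}\right) - F_{1}\left(x^{k+1}, u^{k+1}\right) \geq \left(\lambda/2\right)\norm{u^{k+1} - u^{k}}^{2}$. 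Chaining the two inequalities gives (C1) with $\rho_{1} = \lambda/2$.

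For the subgradient bound (C2) I would record the optimality conditions of the two steps, namely $\lambda A^{T}\left(u^{k} - Ax^{k+1}\right) \in \partial\delta_{C}\left(x^{k+1}\right)$ from \eqref{AMNx} and $\lambda\left(Ax^{k+1} - u^{k+1}\right) \in \partial\delta_{Q}\left(u^{k+1}\right)$ from \eqref{AMNu}. Using the formula for $\crit F_{1}$ recalled above, these let me exhibit the explicit subgradient $w^{k+1} = \left(\lambda A^{T}\left(u^{k} - u^{k+1}\right), \bo\right) \in \partial F_{1}\left(x^{k+1}, u^{k+1}\right)$: the $u$-block cancels exactly because the $u$-update is performed last, while the $x$-block carries only the discrepancy $u^{k} - u^{k+1}$ between the multiplier used in \eqref{AMNx} and the current iterate. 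Hence $\norm{w^{k+1}} \leq \lambda\sqrt{\lambda_{max}\left(A^{T}A\right)}\norm{u^{k+1} - u^{k}}$, which is (C2) with $\rho_{2} = \lambda\sqrt{\lambda_{max}\left(A^{T}A\right)}$. Condition (C3) follows exactly as in the previous proposition, since the iterates are feasible, $x^{k} \in C$ and $u^{k} \in Q$, so $F_{1}\left(x^{k}, u^{k}\right) = \left(\lambda/2\right)\norm{Ax^{k} - u^{k}}^{2}$ is continuous along the sequence.

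The main point requiring care, rather than a genuine obstacle, is the sufficient decrease step (C1): the quadratic gap in $\norm{u^{k+1} - u^{k}}^{2}$ is available only because $Q$ is convex, which is precisely the extra hypothesis of this proposition and which renders the $u$-subproblem strongly convex. One must also resist the temptation to track the $x$-variable, for which no analogous gap is guaranteed when $A^{T}A$ is singular; this is the reason the roles of $z$ and $v$ are swapped relative to the CQ analysis. Once (C1)--(C3) are established and the boundedness hypothesis is invoked, Theorem \ref{T:AbstrGlob}, applied with $z = u$ and $v = x$, delivers the global convergence of $\Seq{u}{k}$ to some $u^{\ast}$ and the criticality of $\left(x^{\ast}, u^{\ast}\right)$ for every limit point $x^{\ast}$ of $\Seq{x}{k}$.
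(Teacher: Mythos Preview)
Your proposal is correct and follows essentially the same route as the paper: you verify conditions (C1)--(C3) of Definition \ref{D:Gradlike} with $z = u$, $v = x$, using the $\lambda$-strong convexity of the $u$-subproblem for (C1), the same optimality conditions and the same explicit subgradient $w^{k+1} = \left(\lambda A^{T}\left(u^{k} - u^{k+1}\right), \bo\right)$ for (C2), and continuity of the coupling term for (C3), before invoking Theorem \ref{T:AbstrGlob}. Your constant $\rho_{2} = \lambda\sqrt{\lambda_{max}\left(A^{T}A\right)}$ is of course the paper's $\lambda\norm{A}$.
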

	\begin{proof}
		We first prove that the sequence $\left\{ \left(x^{k} , u^{k}\right) \right\}_{k \in \nn}$ is a a gradient-like descent sequence for minimizing $F_{1}$ according to Definition \ref{D:Gradlike}. In order to prove condition (C1) we use the fact that the function $u \rightarrow F\left(x^{k + 1} , u\right)$ is strongly convex with parameter $\lambda$. Thus, by the definition of strong convexity and the update step \eqref{AMNu}, we obtain that
		\begin{equation*}
			F\left(x^{k + 1} , u^{k + 1}\right) + \frac{\lambda}{2}\norm{u^{k + 1} - u^{k}}^{2} \leq F\left(x^{k + 1} , u^{k}\right).
		\end{equation*}		
		From the optimality of $x^{k + 1}$ in \eqref{AMNx} we obtain
		\begin{equation*}
			F\left(x^{k + 1} , u^{k}\right) \leq F\left(x^{k} , u^{k}\right).
		\end{equation*}
		Combining both inequalities yields the desired result with $\rho_{1} = \lambda/2$.
	
		For proving condition (C2) we use the optimality conditions associated with the two steps \eqref{AMNx} and \eqref{AMNu}, meaning
		\begin{equation*}
			\lambda A^{T}\left(u^{k} - Ax^{k + 1}\right) \in \partial \delta_{C}\left(x^{k + 1}\right),	
		\end{equation*}
		and
		\begin{equation*}
			\lambda\left(Ax^{k + 1} - u^{k + 1}\right) \in \partial \delta_{Q}\left(u^{k + 1}\right).	
		\end{equation*}
		Therefore, by subtracting $u^{k + 1}$ from both sides of the first inclusion, we arrive at
		\begin{equation*}
			\lambda A^{T}\left(u^{k} - u^{k + 1}\right) \in \partial \delta_{C}\left(x^{k + 1}\right) + \lambda A^{T}\left(Ax^{k + 1} - u^{k + 1}\right),
		\end{equation*}
		and
		\begin{equation*}
			\bo \in \partial \delta_{Q}\left(u^{k + 1}\right) + \lambda\left(u^{k + 1} - Ax^{k + 1}\right).
		\end{equation*}
		This proves that $w^{k + 1} := \left(\lambda A^{T}\left(u^{k} - u^{k + 1}\right) , \bo\right) \in \partial F\left(x^{k + 1} , u^{k + 1}\right)$. Hence, the assertion follows with $\rho_{2} = \lambda\norm{A}$.
	
		Lastly, condition (C3) easily follows since on the generated sequence we have that $F_{1}\left(x^{k} , u^{k}\right) = \left(\lambda/2\right)\norm{Ax^{k} - u^{k}}^{2}$. Therefore $F_{1}$ is continuous on the generated sequence, which proves the required assertion.
		
		Now, from Theorem \ref{T:AbstrGlob} we get the desired result.
	\end{proof}

\subsection{Convergence Analysis of the Lagrangian-Based Algorithms} \label{SSec:Lagrangian}
	As mentioned above, the proof methodology that was used to derive the convergence results of Sections \ref{SSec:CQ-SF1P} and \ref{SSec:AM-SF1P} can not be used to analyze the Lagrangian-based algorithms that were presented in Section \ref{Sec:Models}. However, for this class of algorithms we can follow other recent proof methodology of \cite{BST2018}, which was the first to propose a proof methodology for Lagrangian-based methods in the non-convex setting. To this end, they defined the concept of Lagrangian sequences, which is in some sense an extension of the notion of gradient-like descent sequence to the setting of primal-dual methods. Here we will show that the Proximal ADMM and the Linearized Proximal ADMM presented above indeed generate Lagrangian sequences and therefore converges to critical points of the optimization model (SF4), as was proved in \cite[Theorem 2, p. 1218]{BST2018}. It should be noted that the results of \cite{BST2018} are valid also in the case where the mapping $A$ is not necessarily linear.
\medskip

	Recalling that we focus on the formulation (SF4), where the set $C$ is convex and $Q$ non-convex, via its split form
	\begin{equation} \label{SF1-Split}
		\min_{x \in \real^{n}} \left\{ F_{2}\left(x\right) := d_{C}^{2}\left(x\right) + \delta_{Q}\left(Ax\right) \right\} = \min_{x \in \real^{n}, u \in \real^{m}} \left\{ d_{C}^{2}\left(x\right) + \delta_{Q}\left(u\right) : \, Ax = u \right\},
	\end{equation}
	and with the following augmented Lagrangian
	\begin{equation*}
		L_{\rho}\left(x , u , y\right) = d_{C}^{2}\left(x\right) + \delta_{Q}\left(u\right) + \act{y , Ax - u} + \frac{\rho}{2}\norm{Ax - u}^{2},
	\end{equation*}
	where $y \in \real^{m}$ is the multiplier which associated with the linear constraint $Ax = u$, and $\rho > 0$ is a penalization parameter.
\medskip

	We start by presenting a method which unifies both the Proximal ADMM and the Linearized Proximal ADMM presented above in Section \ref{SSec:ModelSF4}. We define the weighted norm of a vector $v$ with respect to a matrix $M \succ 0$ by $\norm{v}_{M}^{2} := \act{v , Mv}$.
\medskip

	\noindent\fbox{\parbox{\textwidth}{\textbf{Weighted Proximal ADMM for (SF4)} \\
		\textbf{Initialization:} Choose $x^{0} \in \real^{n}$ and $u^{0} , y^{0} \in \real^{m}$, fix a matrix $N \succ 0$. \\
		\textbf{General Step:} For $k = 0 , 1 , 2 , \ldots$ compute
		\begin{align}
			u^{k + 1} & \in \argmin_{u} \left\{ \delta_{Q}\left(u\right) + \act{y^{k} , Ax^{k} - u} + \frac{\rho}{2}\norm{Ax^{k} - u}^{2} \right\}, \label{WPADMMu} \\
			x^{k + 1} & \in \argmin_{x} \left\{ d_{C}^{2}\left(x\right) + \act{y^{k} , Ax  - u^{k + 1}} + \frac{\rho}{2} \norm{Ax - u^{k + 1}}^{2} + \frac{1}{2}\norm{x - x^{k}}_{N}^{2} \right\}, \label{WPADMMx} \\
			y^{k + 1} & = y^{k} + \rho\left(Ax^{k+1} - u^{k+1}\right). \label{WPADMMy}
		\end{align}}}
\medskip

	It is easy to verify that the Proximal ADMM is recovered with $N = \tau I$ for some $\tau > 0$, whereas the Linearized Proximal ADMM Algorithm is recovered when $N = \tau I - \rho A^{T}A$ for some $\tau > \rho\lambda_{max}\left(A^{T}A\right)$. Therefore, we will analyze only the Weighted Proximal ADMM and prove converges to critical points of the Problem (SF4). 
\medskip

	We therefore prove with the following result, which guarantees the global convergence of both algorithms.
	\begin{prop}
		Let $\left\{ \left(x^{k} , u^{k} , y^{k}\right)\right\}_{k \in \nn}$ be a bounded sequence generated by the Weighted Proximal ADMM Algorithm. Suppose that $C$ and $Q$ are semi-algebraic sets and $C$ is convex. Then, the sequence $\Seq{x}{k}$ converges globally to some point $x^{\ast}$ which is a critical point of $F_{2}$.
	\end{prop}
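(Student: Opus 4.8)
The plan is to follow the proof methodology of \cite{BST2018} for Lagrangian-based methods: I will show that the sequence $\left\{ \left(x^{k} , u^{k} , y^{k}\right)\right\}_{k \in \nn}$ generated by the Weighted Proximal ADMM is a \emph{Lagrangian sequence} (the primal-dual analogue of the gradient-like descent sequence of Definition \ref{D:Gradlike}), and then invoke \cite[Theorem 2, p. 1218]{BST2018} to conclude global convergence of $\Seq{x}{k}$ to a critical point of $F_{2}$. Since the Proximal ADMM (Algorithm 6) and the Linearized Proximal ADMM (Algorithm 7) are recovered from the weighted scheme by the choices $N = \tau I$ and $N = \tau I - \rho A^{T}A$, respectively, it suffices to carry out the analysis once, at the level of the weighted scheme, while keeping track of the parameter conditions ($AA^{T} \succ 0$, and $\kappa\left(A^{T}A\right) < 2$ in the linearized case) under which the estimates below are valid.

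The decisive estimate is the control of the dual increments. Writing the optimality condition of the $x$-update \eqref{WPADMMx} and substituting the dual update \eqref{WPADMMy}, I obtain
\begin{equation*}
	A^{T} y^{k + 1} = -\nabla d_{C}^{2}\left(x^{k + 1}\right) - N\left(x^{k + 1} - x^{k}\right),
\end{equation*}
where I use that, because $C$ is convex, $d_{C}^{2}$ is continuously differentiable with $\nabla d_{C}^{2} = 2\left(I_{n} - P_{C}\right)$, a gradient which is Lipschitz continuous with constant $2$. Subtracting the analogous identity at index $k$ and using this Lipschitz property gives
\begin{equation*}
	\norm{A^{T}\left(y^{k + 1} - y^{k}\right)} \leq \left(2 + \norm{N}\right)\norm{x^{k + 1} - x^{k}} + \norm{N}\norm{x^{k} - x^{k - 1}}.
\end{equation*}
Here the assumption $AA^{T} \succ 0$ enters crucially: since then $\norm{A^{T} v}^{2} = \act{v , AA^{T} v} \geq \lambda_{\min}\left(AA^{T}\right)\norm{v}^{2}$ for every $v$, the left-hand side dominates $\norm{y^{k + 1} - y^{k}}$, so the dual increments are bounded by primal increments at steps $k$ and $k - 1$.

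Next I establish sufficient decrease and package it into a Lyapunov function. Evaluating $L_{\rho}$ across the three updates, the $u$-update \eqref{WPADMMu} minimizes $L_{\rho}\left(x^{k} , \cdot , y^{k}\right)$ and hence does not increase $L_{\rho}$; the $x$-update \eqref{WPADMMx}, being the minimizer of a function that is strongly convex with modulus $\lambda_{\min}\left(N\right)$ (the quadratic and $d_{C}^{2}$ terms are convex by convexity of $C$, and the proximal term supplies the strong convexity), produces a decrease of order $\norm{x^{k + 1} - x^{k}}^{2}$; and the $y$-update \eqref{WPADMMy} increases $L_{\rho}$ by exactly $\rho^{-1}\norm{y^{k + 1} - y^{k}}^{2}$. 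Combining these contributions,
\begin{equation*}
	L_{\rho}\left(x^{k + 1} , u^{k + 1} , y^{k + 1}\right) - L_{\rho}\left(x^{k} , u^{k} , y^{k}\right) \leq -a\norm{x^{k + 1} - x^{k}}^{2} + \frac{1}{\rho}\norm{y^{k + 1} - y^{k}}^{2},
\end{equation*}
and inserting the dual bound yields terms in $\norm{x^{k} - x^{k - 1}}^{2}$. To absorb them I pass to the Lyapunov function $\Psi_{k} := L_{\rho}\left(x^{k} , u^{k} , y^{k}\right) + c\norm{x^{k} - x^{k - 1}}^{2}$ and verify that, for $\rho$ large enough and $N$ as above (this is where $\kappa\left(A^{T}A\right) < 2$ is needed in the linearized case), there is a constant $d > 0$ with $\Psi_{k + 1} \leq \Psi_{k} - d\norm{x^{k + 1} - x^{k}}^{2}$; this is the sufficient-decrease condition, and it also forces $\norm{x^{k + 1} - x^{k}} \to 0$ and hence, through the dual estimate, $\norm{y^{k + 1} - y^{k}} \to 0$. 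The optimality conditions of \eqref{WPADMMu}--\eqref{WPADMMx} together with the dual relation then furnish an element of $\partial L_{\rho}\left(x^{k + 1} , u^{k + 1} , y^{k + 1}\right)$ whose norm is bounded by a multiple of $\norm{x^{k + 1} - x^{k}} + \norm{x^{k} - x^{k - 1}}$, giving the subgradient bound; boundedness is assumed, and the continuity/KL requirement holds as in the previous propositions since $C$ and $Q$ are semi-algebraic, so $d_{C}^{2}$, $\delta_{Q}$, and $L_{\rho}$ are semi-algebraic. With the three conditions of a Lagrangian sequence verified, \cite[Theorem 2, p. 1218]{BST2018} yields the claim.

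I expect the main obstacle to be the dual-control step: bounding $\norm{y^{k + 1} - y^{k}}$ by primal increments is precisely where non-convex ADMM analyses are delicate, and it is what forces both the full-row-rank assumption $AA^{T} \succ 0$ and the use of the smoothness of $d_{C}^{2}$ (equivalently, the convexity of $C$). The subsequent calibration of $\rho$ and $N$ that turns the combined estimate into a genuine Lyapunov decrease -- in particular the threshold $\kappa\left(A^{T}A\right) < 2$ in the linearized scheme, which reflects the Lipschitz constant $2$ of $\nabla d_{C}^{2}$ -- is the second, more technical, hurdle.
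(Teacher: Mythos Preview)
Your proposal is correct and follows the same approach as the paper: verify that the generated sequence is a Lagrangian sequence in the sense of \cite{BST2018} and then invoke \cite[Theorem~2, p.~1218]{BST2018}. The paper's version is terser---it simply checks the four abstract Lagrangian-sequence conditions (decrease of $L_{\rho}$ before the dual update via optimality of \eqref{WPADMMu}--\eqref{WPADMMx}, the bound $\norm{\nabla_{x} L_{\rho}} \leq \lambda_{\max}(N)\norm{x^{k+1}-x^{k}}$, an element of $\partial_{u} L_{\rho}$, and closedness of $Q$) and leaves the dual control, the Lyapunov construction, and the role of $AA^{T} \succ 0$ and $\kappa(A^{T}A) < 2$ inside the cited theorem, whereas you have unpacked those mechanisms explicitly.
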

	\begin{proof}
		We first prove that the sequence $\left\{ \left(x^{k} , u^{k} , y^{k}\right) \right\}_{k \in \nn}$ is a Lagrangian sequence as defined in \cite{BST2018}.This requires to prove four conditions. In order to prove the first condition we use the optimality of $u^{k + 1}$ and $x^{k + 1}$ in \eqref{WPADMMu} and \eqref{WPADMMx}, respectively, to obtain that
		\begin{equation*}
			L_{\rho}\left(x^{k} , u^{k + 1} , y^{k}\right) \leq L_{\rho}\left(x^{k} , u^{k} , y^{k}\right),
		\end{equation*}
		and
		\begin{equation*}
			L_{\rho}\left(x^{k + 1} , u^{k + 1} , y^{k}\right) + \frac{1}{2}\norm{x^{k + 1} - x^{k}}_{N}^{2} \leq L_{\rho}\left(x^{k} , u^{k + 1} , y^{k}\right).
		\end{equation*}
		Combining both inequalities yields that
		\begin{equation*}
			\frac{\lambda_{min}\left(N\right)}{2}\norm{x^{k + 1} - x^{k}}^{2} \leq L_{\rho}\left(x^{k} , u^{k} , y^{k}\right) - L_{\rho}\left(x^{k + 1} , u^{k + 1} , y^{k}\right), \quad \forall \,\, k \in \nn,
		\end{equation*}		
		which proves the first condition.
	
		For proving the second and third conditions we use the optimality conditions associated with the two steps \eqref{WPADMMu} and \eqref{WPADMMx}, meaning
		\begin{equation*}
			y^{k} + \rho\left(Ax^{k} - u^{k + 1}\right) \in \partial \delta_{Q}\left(u^{k + 1}\right),
		\end{equation*}
		and
		\begin{equation*}
			-A^{T}\left(y^{k} + \rho\left(Ax^{k + 1} - u^{k + 1}\right)\right) + N\left(x^{k} - x^{k + 1}\right) \in \nabla d_{C}^{2}\left(x^{k + 1}\right).
		\end{equation*}
		Therefore, by subtracting $\lambda Ax^{k + 1}$ from both sides of the first inclusion, we arrive at
		\begin{equation*}
			\rho A\left(x^{k} - x^{k + 1}\right) \in \partial \delta_{Q}\left(u^{k + 1}\right) - y^{k} - \rho\left(Ax^{k + 1} - u^{k + 1}\right) ,
		\end{equation*}
		and
		\begin{equation*}
			N\left(x^{k} - x^{k + 1}\right) \in \nabla d_{C}^{2}\left(x^{k + 1}\right) + A^{T}\left(y^{k} + \rho\left(Ax^{k + 1} - u^{k + 1}\right)\right).
		\end{equation*}
		Thus $\norm{\nabla_{x} L_{\rho}\left(x^{k + 1} , u^{k + 1} , y^{k}\right)} \leq \lambda_{max}\left(N\right)\norm{x^{k + 1} - x^{k}}$, which proves the second condition with $b = \lambda_{max}\left(N\right)$ and $v^{k + 1} :=  \rho A\left(x^{k} - x^{k + 1}\right) \in \partial_{u} L_{\rho}\left(x^{k + 1} , u^{k + 1} , y^{k}\right)$. Hence, the third condition follows.
	
		The forth condition easily follows since the set $Q$ is closed and therefore $\overline{u} \in Q$, which yields the desired result.
		
		Now, the result follows directly from \cite[Theorem 2, p. 1218]{BST2018}.
	\end{proof}

\section{Multiple-Sets Split Feasibility Problems} \label{Sec:Extensions}
	In this section we study Multiple-Sets Split Feasibility (MSSF) Problems and discuss how the above mentioned methods can be adjusted to solve this extension. Let $r$ be a natural number. Let $C$ and $Q_{j}$, $1 \leq j \leq r$, be non-empty and closed subsets of $\real^{n}$ and $\real^{m_{j}}$, respectively. Given also linear mappings $A_{j} : \real^{n} \rightarrow \real^{m}$, then the MSSF Problem seeks to find a point $x^{\ast}$ for which
	\begin{equation} \label{MSSFP}
		x^{\ast} \in C \text{ such that } A_{j}x^{\ast} \in Q_{j}, \,\, \forall \, 1 \leq j \leq r,
	\end{equation}
	we refer the reader for some papers that discuss this extension in the convex setting \cite{ms07,glt18,ree17,gmv19} (which study a more general version of the problem which also allows for multiple sets $C$). Note that, when $A_{j} = A$, $1 \leq j \leq r$, for some linear mapping $A$, then Problem \eqref{MSSFP} reduced to the following version:
	\begin{equation*}
		x^{\ast} \in C \text{ such that } Ax^{\ast} \in \cap _{j = 1}^{r} Q_{j}.
	\end{equation*}
	This problem can be also formulated using similar optimization models (like (SF1)--(SF4)), where each set can be translated using the indicator function or the squared distance function. Therefore, this can yield, in general, many optimization models, but as we discussed above depends on the properties of the set (like convexity) one should decide if to use the indicator function or the distance function. Let's take an example.
\medskip

	As mentioned above one of the optimization models that is very popular in this area is (SF3), and in case of a multiple sets it translates to.
	\begin{equation} \label{SF3MS}
		\min_{x \in \real^{n}} \left\{ \sum_{j = 1}^{r} d_{Q_{j}}^{2}\left(A_{j}x\right) + \delta_{C}\left(x\right) \right\} = \min_{x \in C} \left\{ \sum_{j = 1}^{r}d_{Q_j}^{2}\left(A_{j}x\right) \right\}.
	\end{equation}
	This model, makes sense when the set $C$ is not necessarily convex, while the sets $Q_{j}$, $1 \leq j \leq r$, are convex. The Projected Gradient can be applied for solving this problem, which yields the following extension of the CQ Algorithm
	\begin{equation*}
	x^{k + 1} = P_{C}\left(x^{k} - \frac{1}{\tau}\sum_{j = 1}^{r}A_{j}^{T}\left(A_{j}x^{k} - P_{Q_{j}}\left(A_{j}x^{k}\right)\right)\right),
	\end{equation*}
	for some step-size $\tau > 0$. This algorithm, as can be seen, tackles the sets $Q_{j}$, $1 \leq j \leq r$, in a simultaneous manner, which could result in a slow convergence. A cyclic based algorithm for solving this problem in the convex setting is presented in \cite{wx11}.
	\begin{rema}
		\begin{enumerate}[(i)]
			\item It should be noted that all the algorithms presented in Section \ref{Sec:Models} can be easily generalized to this version of MSSF problems in a simultaneous way similar to the above. The case where there exists multiple sets $C$ is much more complicated and deserves a special study especially in the non-convex setting.
			\item Since our analysis is based on non-linear optimization techniques, some of the results in this paper could also be applied for solving non-linear split feasibility problems. The proposed algorithms are different than the ones presented in Non-linear convex feasibility problems were studied for example in   \cite{gks14,xcyl18} and are less restrictive.
		\end{enumerate}
	\end{rema}
\bigskip

To conclude, in this paper, we have studied the split feasibility problem in the non-convex setting through the presentation of four different models that formulate the feasibility problem as an optimization problem using the indicator and/or squared distance functions. We discussed seven algorithms for tackling these formulations, where each one of them tackles the formulation which fits best in sense of the information on the involved sets $C$ and $Q$ (in this abstract paper we have only exploited the feature of convexity). This paper has two main goals: in the theoretical front, we have provided an up-to-date list of algorithms with the current known theoretical results as summarized in the table on page 2. From this table, one can find a comparison between the discussed algorithms in the sense of global convergence, all algorithms but Algorithm 1 have this property (the global convergence of Algorithms 1 seems to be very a challenging task for future research!). In the applied front, this paper provides a source for users that are interested in solving split feasibility problems in the non-convex world. By exploiting the special structure and data information of the particular sets $C$ and $Q$ (like convexity but others could be useful) one can find the algorithm that best fits the applications at hand.﻿

\bibliographystyle{plain+eid}
\nocite{*}
\bibliography{samplebib}

\end{document}